\newcommand{\formatswitch}{preprint}
\newcommand{\tref}[1]{(\ref{#1})}
\DeclareMathAlphabet\EuScript{U}{eus}{m}{n}
\DeclareMathAlphabet\EuScriptb{U}{eus}{b}{n}
\newcommand{\sscr}[1]{\EuScript{#1}}
\newcommand{\claimenum}{\renewcommand{\theenumi}{\alph{enumi}}
 \renewcommand{\labelenumi}{\textit{(\theenumi)}}
 \renewcommand{\theenumii}{\roman{enumii}}
 \renewcommand{\labelenumii}{\textit{(\theenumii)}}
 \begin{enumerate}}
\newcommand{\claimenumend}{\end{enumerate}}
\newcommand{\romanenum}{\renewcommand{\theenumi}{\roman{enumi}}
 \renewcommand{\labelenumi}{\textit{(\theenumi)}}
 \renewcommand{\theenumii}{\alph{enumii}}
 \renewcommand{\labelenumii}{\textit{(\theenumii)}}
 \begin{enumerate}}
\newcommand{\romanenumend}{\end{enumerate}}
\newtheorem{dummy}{realdumb}[section]
\newtheorem{thm}{Theorem}
\newtheorem{lemma}[dummy]{Lemma}
\newtheorem{prop}[dummy]{Proposition}
{\theoremstyle{definition} }
\theoremstyle{definition} }
\renewcommand{\text}{\mathrm}
\newcommand{\strutdepth}{\dp\strutbox}
\newcommand{\marginalnote}[1]
   {\strut\vadjust{\kern-\strutdepth\domarginalnote{#1}}}
\newcommand{\domarginalnote}[1]{\vtop to \strutdepth{
  \baselineskip\strutdepth
   \vss\llap{ #1\ \ }\null}}  %get sevenpoint font here
\newcounter{showlabelflag}
\newcounter{makelabelflag}
\newcommand{\showlabels}{\setcounter{showlabelflag}{1}}
\newcommand{\makelabels}{\setcounter{makelabelflag}{1}}
\newcommand{\mylabel}[1]{
  \ifthenelse{\value{makelabelflag}=1}
    {\label{#1}}{}
  \ifthenelse{\value{showlabelflag}=1}
    {\marginpar{#1}}{}\relax}
\newcommand{\R}{{\mathbf R}}
\newcommand{\Z}{{\mathbf Z}}
\newcommand{\scr}{\sscr}
\newcommand{\mymargin}[1]{
  \ifthenelse{\value{showlabelflag}=1}
    {\marginpar{#1}}{}\relax}
\newcounter{enumo}\setcounter{enumo}{0}
\newcommand{\RRsh}{\kern -1 pt \Rsh}
\newcounter{keepitemnum}
\newcounter{keepitemnumm}
\begin{document}

\bibliographystyle{amsplain}
\title[Rank 2 free limits]{The Free Group of Rank 2 is a Limit \\
of Thompson's Group \(F\)}
\thanks{AMS
Classification (2000): primary 20F69, secondary 20F05, 20F38.}
\author{Matthew G. Brin}
\date{April 20, 2009}
%\tableofcontents
\CompileMatrices

%\makelabels has to come AFTER \tableofcontents 
%to prevent the labels in the section headers from 
%being processed twice

\makelabels
%\showlabels
%\hidelabels
\maketitle

%**end of header

\begin{abstract}{
We show that the free group of rank 2 is a limit of 
2-markings of Thompson's group \(F\) in the space of all 2-marked 
groups. More specifically, we find a sequence of generating pairs 
for \(F\) so that as one goes out the sequence, the length of the 
shortest relation satisfied by the generating pair goes to infinity. 
}\end{abstract}

\section{Introduction}

From \cite{zarzycki}, a \(k\)-marked group is a pair \((G, S)\)
where \(S\) is an ordered \(k\)-tuple of generators (the
\(k\)-marking) of the group \(G\).  An isomorphism of marked groups
must preserve the markings.  With \(\scr{G}_k\) the set of all
isomorphism classes of \(k\)-marked groups, one says that two
elements of \(\scr{G}_k\) are no more than \(e^{-R}\) apart if they
have satisfy the same relations of length no longer than \(R\).
This gives a metric on \(\scr{G}_k\), and a group \(L\) is a
\(G\)-limit if it is a limit in \(\scr{G}_k\) of a sequence of
marked groups each of which is isomorphic as an unmarked group to
\(G\).

Marked groups and their limits are defined and studied in
\cite{MR2151593} where, among other things, it is shown that
\(\scr{G}_k\) is compact. Limits of marked groups extend the notion
of limit groups of \cite{MR1863735}.

Groups that cannot be obtained as a limit of Thompson's group \(F\)
are studied in \cite{zarzycki}.

Recently Akhmedov, Stein and Taback \cite{AST}
have announced that the free
group on \(k\) generators is a limit of \(k\)-markings of Thompson's group
\(F\) when \(k \ge3\).  The purpose of this paper is to prove the following.  

\begin{thm}\mylabel{THM} The free group on 2 generators
is a limit of 2-markings of \(F\).  \end{thm}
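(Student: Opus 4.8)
The plan is to split the theorem into a single purely combinatorial statement about \(F\) and a routine limit argument, and then to put all the effort into the combinatorial statement. By the description of the metric on \(\scr{G}_2\) recalled in the introduction, two markings lie within \(e^{-R}\) of one another exactly when they satisfy the same relations of length at most \(R\). The standard marking of the free group \(F_2\) satisfies no nontrivial relation whatsoever, so it suffices to exhibit a sequence of generating pairs \((a_n,b_n)\) of \(F\) for which the length \(R_n\) of the shortest nontrivial relation satisfied by \((a_n,b_n)\) tends to infinity; then \((F,(a_n,b_n)) \to (F_2,\text{std})\) in \(\scr{G}_2\) and we are done. After this reduction nothing about limits needs to be revisited: the whole content becomes the construction of generating pairs whose shortest relation is arbitrarily long.

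To build such pairs I would work in the realization of \(F\) as piecewise-linear homeomorphisms of \([0,1]\), equivalently in the tree-pair/monoidal model set up earlier. The guiding principle is that, by Brin--Squier, \(F\) has no nonabelian free subgroup, so any attempt at a genuine ping-pong is doomed globally; but a \emph{partial} ping-pong table is all that is needed to force long relations. Concretely, for each \(n\) I would seek disjoint open sets \(X,Y \subset (0,1)\) and elements \(a_n,b_n \in F\) realizing the free-group pattern for bounded exponents, e.g.\ \(a_n^{\,k}(Y)\subset X\) and \(b_n^{\,k}(X)\subset Y\) for all \(k\) with \(0<|k|\le n\), rather than for all \(k\), which is impossible in \(F\). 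A self-similar, nested construction is the natural way to produce these: arrange \(a_n,b_n\) so that after rescaling they reproduce the configuration of \(a_{n-1},b_{n-1}\) on a subinterval, so that the free-like behavior survives through \(n\) alternations before the intrinsic relations of \(F\) intervene.

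The key step, and the main obstacle, is the quantitative lower bound on relation length. Given a reduced word \(w\) in two letters of length at most \(R_n\), one must show \(w(a_n,b_n)\neq 1\) in \(F\). With a partial ping-pong table as above, the standard alternating-syllable argument shows that a point chosen outside \(X\cup Y\) is carried into \(X\) or \(Y\) by \(w(a_n,b_n)\), hence \(w(a_n,b_n)\neq\mathrm{id}\), \emph{provided} the syllable exponents of \(w\) stay in the range \(|k|\le n\) where the table is valid. Controlling words whose syllables carry large exponents is the delicate part, and it is here that the detailed geometry of \(a_n,b_n\) (slopes, breakpoints, nesting depth) must be used to convert the hypothesis ``length \(\le R_n\)'' into ``all relevant exponents \(\le n\),'' thereby producing \(R_n\to\infty\). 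I expect this bookkeeping, pushing a uniform bound through every word type rather than only the cleanly alternating ones, to be the technically demanding core of the argument.

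Finally one must check that each \((a_n,b_n)\) genuinely generates \(F\) and not a proper subgroup, since the independence forced above works directly against generation. I would verify this by exhibiting a known generating set, such as \(x_0,x_1\), as explicit words in \(a_n\) and \(b_n\), using that generation is a finite, scale-bounded condition which the nested construction can be arranged to satisfy at its coarsest level while the freeness is installed at the finer levels. This tension, rich enough to generate \(F\) yet independent enough to avoid short relations, is exactly what makes the rank-\(2\) case harder than the rank \(\ge 3\) case of Akhmedov--Stein--Taback, where the additional generator supplies the needed room.
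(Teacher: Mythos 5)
Your opening reduction is exactly the paper's: it suffices to produce a sequence of generating pairs of \(F\) whose shortest nontrivial relation has length tending to infinity, and this part is correct and needs no further comment. Your freeness mechanism is also in the right family --- the approximate ping-pong of \cite{brin+squier}, which is precisely the technique the paper uses --- but your specific setup both mislocates the difficulty and is doubtfully realizable as stated. First, the ``delicate conversion'' you anticipate, from ``length \(\le R_n\)'' to ``all syllable exponents \(\le n\),'' is vacuous: a reduced word of length at most \(n\) automatically has every syllable exponent at most \(n\), so one simply takes \(R_n = n\); there is no bookkeeping problem there. Second, a single pair of disjoint sets \(X, Y\) with \(a^k(Y)\subset X\) and \(b^k(X)\subset Y\) for \(0<|k|\le n\) is the wrong picture in \(F\): since the images \(a^k(Y)\) march monotonically toward the attracting/repelling fixed points of \(a\), a set \(X\) containing them for \(1\le k\le n\) either also contains the tails (giving a full ping-pong table and hence a free subgroup, impossible by Brin--Squier) or must be a union of many small intervals interleaved with the dynamics of \(b\) --- which is exactly the chain structure the paper builds explicitly. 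Its elements \(g_0, g_1\) are products of \(m\) translated blocks with alternating attracting and repelling fixed points at integer stations, the ``ping-pong sets'' are unions of half-neighborhoods of those stations, and the induction is letter-by-letter rather than syllable-by-syllable: each letter moves the tracked point at most \(1\), so the argument survives exactly while the point stays in the window \([b+1, b+4m]\), yielding the bound \(n = 2m-3\) directly from word length. Your ``nested self-similar'' construction might conceivably be engineered into something like this, but as written it is a hope, not a construction.

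The genuine gap, however, is the generation step, which is where essentially all of the paper's work goes and for which you offer no mechanism. Writing \(x_0, x_1\) as explicit words in \(a_n, b_n\) runs straight into the obstruction you yourself identify: every word in the pair acts on the almost-free region, and the freeness installed there prevents any short word from restricting to the identity where it would need to. The paper's key idea --- entirely absent from your proposal --- is to manufacture words in the generators that are \emph{trivial on the modified region}: it arranges \(X_i = x_i\) on \([0, b-5)\), exhibits a word \(C\) that expands within the modified region, and observes that conjugating suitable words \(S, T\) by \(C^{i}\) pushes their supports in \((b-5,\infty)\) off themselves, so the commutators \(Z = [S, C^{-i}SC^{i}]\) and \(W = [T, C^{-i}TC^{i}]\) of \tref{TheDefs} are trivial on \((b-5,\infty)\) and therefore equal the corresponding words in \(x_0, x_1\). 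A normal-form computation then shows these produce \(w_1^{-1}\) and \(y_1^{-1}\), generators of the copy \(F_{[1,3]}\), and Lemma \ref{GenLem} (containment of \(F_{[1,3]}\), covering of \((0,\infty)\) by translates of \((1,3)\), and compact support of \(X_ix_i^{-1}\)) yields \(G = F\); note that even then \(x_0\) is recovered only indirectly, via \(X_0x_0^{-1}\) being compactly supported, not as a word found by inspection. Your assertion that ``generation is a finite, scale-bounded condition which the nested construction can be arranged to satisfy at its coarsest level'' is not a substitute for this device; without something playing the role of the support-disjointness commutator trick, your plan has no route from the almost-free pair to a generating pair, and the proposal is incomplete at its crux.
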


See \cite{CFP} for an introduction to Thompson's group \(F\).

There is a closely related notion of free-like.  Essentially a group
\(G\) is \(k\)-free-like if both (a) the free group on \(k\)
generators can be obtained as a \(G\)-limit, and (b) the group \(G\)
is ``uniformly non-amenable'' with respect to the markings used in
the limit.  See \cite{Sapir+olshanskii:free-like} for the full
definition, examples and discussion of the free-like property and
related concepts.

The notion of a free group occuring as a \(G\)-limit can be easily
restated.  The free group of rank \(k\) is a \(G\)-limit if there is
a sequence of \(k\)-tuples \(S_n\) in \(G\) so that (a) each \(S_n\)
generates \(G\), and (b) for each \(n\), no reduced word of length
at most \(n\) in the elements of \(S_n\) and their inverses
represents the trivial element of \(G\).  Alternatively for (b), one
can say that the homomorphism from the free group on \(k\)
generators taking the generators of the free group to the elements
of \(S_n\) embeds the \(n\)-ball of the free group in \(G\).

Our proof combines two tendencies in \(F\).  First, it is not hard
to find elements in \(F\) that satisfy no short relations.  Second
it is rather ``easy'' to generate \(F\).  The word ``easy'' is in
quotes because while it might not be hard to pick out sets of
elements that generate \(F\), the calculations that show that they
generate might be complicated.  This is discussed more in the body
of the paper.

The technique that we use to embed large balls of the free group in
\(F\) is taken from \cite{brin+squier}.  An alternative technique is
found in \cite{esyp}.  However it is not clear that the alternative
technique lends itself as well to building generators.

In Section \ref{Outline} we give the outline.  In Section
\ref{Details} we give the details that show that the outline is
correct.  The outline is the more important part of the paper, and
the details should be read only by those that wish to check for
correctness.

Background for this paper would include \cite{CFP} for its general
introduction to the group \(F\), for the normal form of an element
in terms of the infinite generating set, and for the description in
Section 1 of \cite{CFP} of the ``rectangular diagrams'' of Thurston
which are aids to calculation.  Rectangular diagrams will be used
extensively in Section \ref{Details}.  We use the representation of
\(F\) as a group of right acting homeomorphisms of the non-negative
real numbers.  A discussion close to this view is found in Section 2
of \cite{brin+fer} and to some extent in \cite{brown:finiteprop}.

We would like to thank Mark Sapir for supplying the question
answered by Theorem \ref{THM}.

\section{Outline}\mylabel{Outline}

The model of \(F\) that we work with is the model on \[\R_{\ge0} =
\{t\in \R\mid t\ge0\}\] in which the generators are the self 
homeomorphisms \(x_i\), \(i\ge0\), of \(\R_{\ge0}\) 
operating on the right defined by
\[tx_i = \begin{cases} t, &t\le i, \\
i+2(t-i), &i\le t\le i+1, \\
t+1, &t\ge i+1. \end{cases}\]

The \(x_i\) satisfy the usual relations \(x_jx_i = x_ix_{j+1}\) whenever 
\(i<j\), and it is known that \(x_0\) and \(x_1\) generate \(F\).

Our method will be to modify \(x_0\) and \(x_1\) "slightly" so that (I)
they still generate, and (II) they don't satisfy short relations.  The 
slightness of the modification will mean that in a region large
enough to be useful, the modifications agree with the originals.

\subsection{Getting generators}

Showing (I), that the modifications still generate, will need the
more intricate argument.  I learned the following use of the
subgroups \(F_{[a,b]}\) from Collin Bleak and Bronlyn Wassink.

Let \([a,b]\) be a closed interval, and let \(F_{[a,b]}\) be all the
elements in \(F\) whose support is contained in \([a,b]\).  The
{\itshape support} of an \(f\in F\) is the set \(\{t\in \R_{\ge0}
\mid tf\ne t\}\).

We will only refer to \(F_{[a,b]}\)
when \(0\le a<b\) and both \(a\) and \(b\) are dyadic (that is, of
the form \(m/2^n\) with \(m\) and \(n\) in \(\Z\)).  It is standard
that under these restrictions \(F_{[a,b]}\) is isomorphic to \(F\).  

When \(a\) and \(b\) are two consecutive integers, it is very 
easy to write down generators for \(F_{[a,b]}\).  For \(a=0\) and \(b=1\), 
we have the usual model of \(F\) on \([0,1]\) and typical generators 
for \(F_{[0,1]}\) are
\(y_0\) and \(z_0\) defined by
\[\begin{split} 
ty_0 &= \begin{cases} 2t, &0\le t\le \frac14, \\
t+\frac14, &\frac14\le t\le \frac12, \\
1-\frac12(1-t), &\frac34 \le t\le 1, \end{cases} \\
tz_0 &= \begin{cases} 2t, &0\le t\le \frac18, \\
t+\frac18, &\frac18\le t\le \frac14, \\
\frac12-\frac12(\frac12-t), &\frac38 \le t\le \frac12, \\
t, &\frac12 \le t\le 1. \end{cases} 
\end{split}\]

It is standard and an easy exercise that 
\[\begin{split} y_0 &= x_0^2 x_1^{-1}x_0^{-1}, \\
z_0 &= x_0^3 x_1^{-1} x_0^{-2}.\end{split}\]

To generate \(F_{[i,i+1]}\) for a positive integer \(i\), we use 
\(y_i\) and \(z_i\) where 
\mymargin{YZDef}\begin{equation}\label{YZDef}
\begin{split} 
y_i &= x_i^2 x_{i+1}^{-1}x_i^{-1}, \\
z_i &= x_i^3 x_{i+1}^{-1} x_i^{-2}.
\end{split}
\end{equation}

We will also use the elements 
\mymargin{WIDef}\begin{equation}\label{WIDef}
w_i=x_ix_{i+1}^{-1}
\end{equation}
and it is also standard and easy exercise that \(w_i\) and \(y_i\)
generate \(F_{[i,i+2]}\) in a manner "identical" to the manner in
which \(y_i\) and \(z_i\) generate \(F_{[i,i+1]}\).

\begin{lemma}\mylabel{GenLem}  Let \(X_0\) and 
\(X_1\) generate a subgroup \(G\) of \(F\) and assume the following hold.
{\claimenum
\item The support of \(X_0x_0^{-1}\) is contained in a compact subset
of \((0,\infty)\).
\item The support of \(X_1x_1^{-1}\) is contained in a compact subset
of \((0,\infty)\).
\item The group \(G\) generated by \(X_0\) and \(X_1\)
contains \(w_1\) and \(y_1\).
\item The translates of the open interval \((1,3)\) under \(G\) cover all 
of \((0,\infty)\).
\claimenumend}
Then  \(G=F\).  \end{lemma}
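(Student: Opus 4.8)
The plan is to show that \(G\) contains \(x_0\) and \(x_1\); since these generate \(F\) and \(G\subseteq F\), that forces \(G=F\). Put \(A=X_0x_0^{-1}\) and \(B=X_1x_1^{-1}\). By hypotheses (a) and (b) both lie in \(F\) and have support contained in a compact subset of \((0,\infty)\). Since \(x_0=A^{-1}X_0\) and \(x_1=B^{-1}X_1\) with \(X_0,X_1\in G\), it suffices to show that \(A,B\in G\). I will in fact prove the stronger statement that every element of \(F\) with support in a compact subset of \((0,\infty)\) lies in \(G\); write \(F_c\) for the set of these elements.

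To build a supply of subgroups inside \(G\), note first that by (c) and the stated fact that \(w_1\) and \(y_1\) generate \(F_{[1,3]}\), we have \(F_{[1,3]}\subseteq G\). Because the homeomorphisms act on the right and each \(g\in F\) carries dyadics to dyadics, conjugating an element supported in \((1,3)\) by \(g\) yields an element supported in the translate \((1,3)g\), which again has dyadic endpoints; hence \(g^{-1}F_{[1,3]}g=F_{(1,3)g}\subseteq G\) for every \(g\in G\). By (d) the intervals \((1,3)g\) cover \((0,\infty)\). Let \(H\) be the subgroup of \(G\) generated by all the \(F_{(1,3)g}\); the goal is now \(F_c\subseteq H\).

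For the fragmentation, fix \(h\in F_c\), supported in \([\alpha,\beta]\subset(0,\infty)\). By compactness finitely many of the covering intervals \((1,3)g\) already cover \([\alpha,\beta]\), and from these I can extract a chain \(J_1,\dots,J_r\) with dyadic endpoints, ordered so that consecutive intervals overlap and the union is an interval containing \([\alpha,\beta]\). The crux is an overlap lemma: if dyadic intervals \(I_1=(a,c)\) and \(I_2=(b,d)\) satisfy \(a<b<c<d\), then \(F_{I_1\cup I_2}\subseteq\langle F_{I_1},F_{I_2}\rangle\). Granting this, a chaining induction along \(J_1,\dots,J_r\) gives \(F_{J_1\cup\cdots\cup J_r}\subseteq H\), so \(h\in H\), as desired.

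To prove the overlap lemma, fix a dyadic point \(e\) in the overlap \((b,c)\). Each of \(F_{I_1}\) and \(F_{I_2}\) is transitive on the dyadic points of its own interval, so the group they generate moves \(e\) to every dyadic point of \((a,d)\); for \(h\in F_{I_1\cup I_2}\) choose \(\phi\in\langle F_{I_1},F_{I_2}\rangle\) with \(e\phi=eh\). Then \(h\phi^{-1}\) fixes \(e\), hence preserves \([a,e]\) and \([e,d]\) and factors as a product of an element supported in \((a,e)\subseteq I_1\) and one supported in \((e,d)\subseteq I_2\); thus \(h\phi^{-1}\in\langle F_{I_1},F_{I_2}\rangle\) and so \(h\in\langle F_{I_1},F_{I_2}\rangle\). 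With the lemma in hand, \(A,B\in F_c\subseteq H\subseteq G\), so \(x_0,x_1\in G\) and \(G=F\). The main obstacle is the fragmentation of the third step, and within it the overlap lemma: this is the one place where the internal structure of \(F\) — transitivity on dyadic points and the splitting of an element that fixes an interior point — is genuinely used, whereas the initial reduction and the conjugation bookkeeping are formal.
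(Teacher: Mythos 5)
Your proof is correct and follows essentially the same route as the paper's: use (c) to get \(F_{[1,3]}\subseteq G\), use (d) and conjugation to capture every element of \(F\) supported in a compact subset of \((0,\infty)\), and then recover \(x_0\) and \(x_1\) from (a) and (b). The paper's proof is a one-sentence version of this that leaves the fragmentation step implicit; your chain-of-intervals argument and overlap lemma (splitting an element at a dyadic fixed point, using transitivity on dyadics) are the standard details it omits, and they check out.
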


\begin{proof} From (c), we know that \(G\) contains \(F_{[1,3]}\),
from (d) we know that \(G\) contains all elements of \(F\) whose
support is contained in a compact subset of \((0,\infty)\), and from (a)
and (b) we know that \(G\) contains \(X_0x_0^{-1}\) and
\(X_1x_1^{-1}\) and thus \(x_0\) and \(x_1\).   Thus \(G\)
contains all of \(F\).  \end{proof}

\subsection{Avoiding relations}

Showing (II), that our chosen generators don't satisfy short relations,
will be done as in \cite{brin+squier}.  We will take homeomorphisms on the
circle that generate a free group and lift these to the real line.  The 
lifts will not be elements of \(F\), but approximations with supports on 
compact subsets will be elements of \(F\).  If the compact subsets are long
enough, then as shown in \cite{brin+squier} the only relations that the 
approximations satisfy must be long.  Let us refer to these approximations
as "almost free" elements.

We will need to know more about these "almost free elements" than
just the fact that they only satisfy long relations.  We will also
need to know some specific relations that they do satisfy.  As is
typically the case with \(F\), these relations will be commutators
of certain words in the almost free elements.  These relations will
be used to build our generating set.

\subsection{Getting generators, revisited}

We can now give better descriptions of our generators \(X_0\) and
\(X_1\).

We will work with two intervals.  On one interval \([0,b-5)\) for
some \(b>5\) that we will choose, \(X_i\) will agree with \(x_i\)
for \(i=0,1\).  (The strange way of giving the upper limit of the
interval is to make things convenient later.)  On a second interval
\((b-5,d)\) for a \(d>b\) that will depend on \(n\), \(X_i\) will
agree with an element \(g_i\) for \(i=0,1\), (also depending on
\(n\)) for which it is known that the \(g_i\) satisfy no 
relations shorter than \(n\) but which do satisfy certain specific
relations that are longer than \(n\).

We then consider words in the \(X_i\).  We will use capital letters
to denote such words.  For example 
\[C=X_0^2 X_1^2 X_0^{-2} X_1^{-2}\] will be one word.  The corresponding
lower case letter will denote the corresponding word in the \(x_i\).
Thus \[c=x_0^2 x_1^2 x_0^{-2} x_1^{-2}\] denotes the word corrdesponding
to \(C\).

For certain words (let \(W\) denote such a word for this discussion)
in the \(X_i\), the corresponding word (\(w\) in this example) in
the \(x_i\) will satisfy \(W=w\) in \(F\) because
\begin{enumerate} 
\item \(W\) will be trivial on \((b-5,d)\), 
\item the \(X_i\) agree with the \(x_i\) on \((0,b-5)\), and 
\item the word \(w\) in the \(x_i\) has support in \([0,b-5)\).  
\end{enumerate}

\subsection{Avoiding relations, revisited}

We now describe the \(X_i\).  Our first basic building block will be the
piece of function below on the left and its inverse below on the
right.
\mymargin{BasicBlocks}\begin{equation}\label{BasicBlocks}
\begin{split}
\xy
(0,0); (0,32)**@{-}; (32,32)**@{-}; (32,0)**@{-}; (0,0)**@{-};
(4,0); (4,16)**@{-}; (0,16)**@{-};
(8,0); (8,24)**@{-};  (0,24)**@{-};
(16,0); (16,28)**@{-}; (0,28)**@{-};
(0,0); (4,16)**@{-}; (8,24)**@{-}; (16,28)**@{-}; (32,32)**@{-};
(-2,-2)*{\scriptstyle 0};
(4,-2)*{\scriptstyle \frac14};
(8,-2)*{\scriptstyle \frac12};
(16,-2)*{\scriptstyle 1};
(32,-2)*{\scriptstyle 2};
(-2,16)*{\scriptstyle 1};
(-2,24)*{\scriptstyle \frac32};
(-2,28)*{\scriptstyle \frac74};
(-2,32)*{\scriptstyle 2};
\endxy
\quad
\quad
\quad
\quad
\xy
(0,0); (0,32)**@{-}; (32,32)**@{-}; (32,0)**@{-}; (0,0)**@{-};
(0,4); (16,4)**@{-}; (16,0)**@{-};
(0,8); (24,8)**@{-};  (24,0)**@{-};
(0,16); (28,16)**@{-}; (28,0)**@{-};
(0,0); (16,4)**@{-}; (24,8)**@{-}; (28,16)**@{-}; (32,32)**@{-};
(-2,-2)*{\scriptstyle 0};
(-2,4)*{\scriptstyle \frac14};
(-2,8)*{\scriptstyle \frac12};
(-2,16)*{\scriptstyle 1};
(-2,32)*{\scriptstyle 2};
(16,-2)*{\scriptstyle 1};
(24,-2)*{\scriptstyle \frac32};
(28,-2)*{\scriptstyle \frac74};
(32,-2)*{\scriptstyle 2};
\endxy
\end{split}
\end{equation}

The reader can verify that the function on the left is \(w_0^2\), a
fact that will be convenient for notation but that is not terribly
important otherwise.  The most important property that we need is
that \(\frac12\) is taken to \(\frac32\).  That is, all points in
\([0,2]\) not within \(\frac12\) of the left fixed point are carried
to within \(\frac12\) of the right fixed point.  Also important is
that no point moves more than one.

Our second basic building block will be \(w_0^{-2}w_2^2\) that is
pictured below.
\mymargin{SecondBlock}\begin{equation}\label{SecondBlock}
\begin{split}
\xy
(0,0); (0,32)**@{-}; (32,32)**@{-}; (32,0)**@{-}; (0,0)**@{-};
(0,0); (8,2)**@{-}; (12,4)**@{-}; (14,8)**@{-}; (16,16)**@{-};
(18,24)**@{-}; (20,28)**@{-}; (24,30)**@{-}; (32,32)**@{-};
(0,16); (32,16)**@{-}; (16,0); (16,32)**@{-};
(-2,-2)*{\scriptstyle 0};
(-2,16)*{\scriptstyle 2};
(-2,32)*{\scriptstyle 4};
(16,-2)*{\scriptstyle 2};
(32,-2)*{\scriptstyle 4};
\endxy
\end{split}
\end{equation}

We consider a composition of \(m\) copies of the block from
\tref{SecondBlock} translated by muptiples of 4 as follows:
\[(w_0^{-2}w_2^2)(w_4^{-2}w_6^2) \cdots (w_{4(m-1)}^{-2} 
w_{4(m-1)+2}^2).\]
The graph looks something like the picture below, but the small
scale prevents a truly accurate picture.

\newcommand{\DOUBLEBLOCK}
{
\xy
(0,0); (0,8)**@{-}; (8,8)**@{-}; (8,0)**@{-}; (0,0)**@{-};
(0,0); (3,2)**@{-}; (5,6)**@{-}; (8,8)**@{-};
\endxy
}

\[
\xy
(-4,-5)*{\scriptstyle 0};
(4,-5)*{\scriptstyle 4};
(0,0)*{\DOUBLEBLOCK};
(8,8)*{\DOUBLEBLOCK};
(12,3)*{\scriptstyle 8};
(16,16)*{\DOUBLEBLOCK};
(20,11)*{\scriptstyle 12};
(22,22)*{\cdot};
(24,24)*{\cdot};
(26,26)*{\cdot};
(32,32)*{\DOUBLEBLOCK};
(36,27)*{\scriptstyle 4m-8};
(40,40)*{\DOUBLEBLOCK};
(44,35)*{\scriptstyle 4m-4};
(48,48)*{\DOUBLEBLOCK};
(52,43)*{\scriptstyle 4m};
\endxy
\]

If we conjugate this function by a translation by \(b>0\) (so that
it is the product \((w_b^{-2}w_{b+2}^2)(w_{b+4}^{-2} w_{b+6}^2)
\cdots\) etc.), then we get a function with graph similar to that
above and whose support is on \([b,b+4m]\).  Call this function
\(g_0\).  To make the next discussion easier, we take \(b\) to be a
multiple of 4.

The function \(g_0\) has the property that positive powers of
\(g_0\) have the multiples of 4 in \([b,b+4m]\) as attracting fixed
points and the odd multiples of \(2\) in \([b,b+4m]\) as repelling
fixed points.

If we conjujgate \(g_0\) by translation by \(1\), then we get a
function \(g_1\) whose support is on \(J=[b+1, b+4m+1]\), which has
the elements from \(\Z\) that are equal to 1 modulo 4 in \(J\) as
attracting fixed points, and which has the elements from \(\Z\) that
are equal to 3 modulo 4 in \(J\) as repelling fixed points.

Let \(n=2m-3\).  It is standard that given any reduced word in
\(g_0\) and \(g_1\) of length less than \(n\), then the function
corresponding to that word cannot be the identity, and thus the pair
\(g_0\) and \(g_1\) cannot satisfy any relation shorter than \(n\).
One sees this as follows.  Associate to each of the four elements
\(g_0^{\pm1}\) and \(g_1^{\pm1}\) its set of attracting fixed points
in \([b,b+4m]\).  Thus \(g_0\) is associated to the integers in
\([b,b+4m]\) that are equal to 0 modulo 4, \(g_0^{-1}\) is
associated to the integers in \([b,b+4m]\) that are equal to 2
modulo 4, and so forth.  Let \(w\) be a reduced word in
\(g_i^{\pm1}\), \(i=0,1\).  Let \(\zeta\) be an integer within two
of \(b+2m\) in a set associated with neither the last letter in
\(w\), nor the inverse of the first letter.  One then shows
inductively on the length of \(w\) (based on the fact that \(w\) is
reduced), that the image of \(\zeta\) under \(w\) is within
\(\frac12\) of a point in the set associated to the last letter.
Since the image of \(\zeta\) never moves more than one under the
action of each successive letter in \(w\), and since the induction
persists as long as the image of \(\zeta\) stays within
\([b+1,b+4m]\), the induction will survive as long as the length of
\(w\) is not more than \(n\).  (We start the interval in the
previouis sentence at \(b+1\) since the behavior of \(g_1\) is under
the control of the blocks in \tref{BasicBlocks} only starting at
\(b+1\).) 

\subsection{The generators themselves}

We wish to combine the function \(g_0\) with \(x_0\).  This cannot
be done directly since their behaviors at \(b\) do not match.  At
\(b\), the first is fixed and the second translates by 1.  Thus we
will alter \(g_0\) in the interval \([b-4,b]\) so as to agree with
the picture below on the right instead of its originally defined
behavior as pictured below on the left.  
\mymargin{LeftEnd}\begin{equation}\label{LeftEnd}
\begin{split}
\xy 
(0,0); (0,48)**@{-};(48,48)**@{-}; (48,0)**@{-}; (0,0)**@{-}; 
(12,0); (12,48)**@{-};
(24,0); (24,48)**@{-};
(36,0); (36,48)**@{-};
(0,12); (48,12)**@{-}; 
(0,24); (48,24)**@{-}; 
(0,36); (48,36)**@{-}; 
(-4,1)*{\scriptstyle b-4}; 
(-4,12)*{\scriptstyle b-2}; 
(-4,24)*{\scriptstyle b}; 
(-4,36)*{\scriptstyle b+2}; 
(-4,47)*{\scriptstyle b+4};
(24,-3)*{\scriptstyle b}; 
(0,0); (24,24)**@{-}; 
(30,25.5)**@{-}; (33,27)**@{-};
(34.5,30)**@{-}; (37.5,42)**@{-}; 
(39,45)**@{-}; (42,46.5)**@{-};
(48,48)**@{-}; 
\endxy 
\qquad 
\xy 
(0,0); (0,48)**@{-};(48,48)**@{-}; (48,0)**@{-}; (0,0)**@{-}; 
(12,0); (12,48)**@{-};
(24,0); (24,48)**@{-};
(36,0); (36,48)**@{-};
(0,12); (48,12)**@{-}; 
(0,24); (48,24)**@{-}; 
(0,36); (48,36)**@{-}; 
(-4,1)*{\scriptstyle b-4}; 
(-4,12)*{\scriptstyle b-2}; 
(-4,24)*{\scriptstyle b}; 
(-4,36)*{\scriptstyle b+2}; 
(-4,47)*{\scriptstyle b+4};
(24,-3)*{\scriptstyle b}; 
(0,6); (12,18)**@{-}; (15,21)**@{-}; (18,22.5)**@{-};
(24,24)**@{-}; 
(30,25.5)**@{-}; (33,27)**@{-};
(34.5,30)**@{-}; (37.5,42)**@{-}; 
(39,45)**@{-}; (42,46.5)**@{-};
(48,48)**@{-}; 
\endxy 
\end{split}
\end{equation}
The exact
values involved in the new \(g_0\) will be made clear in Section
\ref{Details}.  What we need to know now is that the function on the
right acts as translation by 1 at  \(b-2\).

We must similarly change the behavior above \(b+4m\) so that its behavior
is as shown below. Once again, exact values will be given in Section
\ref{Details}.  For now we only need to know that the function acts as
translation by 1 on \([b+4m+3,\infty)\).
\mymargin{RightEnd}\begin{equation}\label{RightEnd}
\begin{split}
\xy
(0,0); (0,48)**@{-};(48,48)**@{-}; (48,0)**@{-}; (0,0)**@{-}; 
(12,0); (12,48)**@{-};
(24,0); (24,48)**@{-};
(36,0); (36,48)**@{-};
(0,12); (48,12)**@{-}; 
(0,24); (48,24)**@{-}; 
(0,36); (48,36)**@{-}; 
(-7,1)*{\scriptstyle b+4m-2}; 
(-7,12)*{\scriptstyle b+4m}; 
(-7,24)*{\scriptstyle b+4m+2}; 
(-7,36)*{\scriptstyle b+4m+4}; 
(-7,47)*{\scriptstyle b+4m+6};
(12,-3)*{\scriptstyle b+4m}; 
(0,0); (1.5,6)**@{-}; 
(3,9)**@{-}; (6,10.5)**@{-};
(12,12)**@{-}; 
(18,13.5)**@{-}; (21,15)**@{-};
(22.5,18)**@{-}; (24,24)**@{-};
(25.5,30)**@{-}; (27, 33)**@{-};
(30,36)**@{-};
(42,48)**@{-};
\endxy
\end{split}
\end{equation}

Now we can define \(X_0\) to agree with \(x_0\) on \([0,b-2]\), with
the modified \(g_0\) on \([b-2,b+4m+3]\) and with \(x_0\) again on
\((b+4m+3, \infty)\).

We make similar modifications to \(g_1\) and end up with an \(X_1\)
that is \(X_0\) conjugated by a translation by 1.  In particular
\(X_1\) agrees with \(x_1\) on \([0,b-1]\).  

We take \(m\) to be a positive integer.  We let \(i=m+2\).

In the following definitions, we set \(X_i=X_0^{1-i}X_1X_0^{i-1}\)
for each \(i>1\)
to parallel the relation \(x_i=x_0^{1-i}x_1x_0^{i-1}\) that holds in \(F\).
We now define:
\mymargin{TheDefs}\begin{equation}\label{TheDefs}
\begin{split}
C &= X_0^2 X_1^2 X_0^{-2} X_1^{-2}, \\
S &= X_0 X_2 X_1^{-2}, \\
T &= X_0^2 X_2 X_4 X_3^{-2} X_1^{-1} X_0^{-1}, \\
\Sigma &= C^{-i} S C^i, \\
\Theta &= C^{-i} T C^i, 
\end{split}
\qquad
\qquad
\qquad
\begin{split}
Z &= [S, \Sigma] = S \Sigma S^{-1} \Sigma^{-1}, \\
W &= [T, \Theta] = T \Theta T^{-1} \Theta^{-1}, \\
P &= Z^{-1} W, \\
Q &= X_1^{-1} P X_1 P^{-1}, \\
H &= X_1^{-2} Q X_1^2, \\
K &= X_1 H X_1^{-1}.
\end{split}
\end{equation}

As mentioned above, we define the corresponding lower case symbols, 
\(c\) for \(C\), \(\theta\) for \(\Theta\), etc., as the corresponding
words in the \(x_i\).

It is clear that \(X_0\) and \(X_1\) satisfy (a) and (b) of Lemma
\ref{GenLem}.  

In the proposition below, the elements \(y_1\) and \(w_1\) are as
defined in \tref{YZDef} and \tref{WIDef}, and \(w_1\) is not related
to the \(w\) that corresponds to \(W\) defined in \tref{TheDefs}.

\begin{prop}\mylabel{PROP}  The following hold for all sufficiently
large values of \(b\).
{\begin{enumerate}\renewcommand{\theenumi}{\roman{enumi}}
\item The symbols defined in the right hand column of \tref{TheDefs}
represent the same elements of \(F\) as the corresponding lower case
symbols. 
\item We have the equalities \(H=w_1^{-1}\) and \(K=y_1^{-1}\).  
\item The elements \(X_1\) and \(X_2\) satisfy no relation of 
length less than \(2m-3\). 
\item Hypothesis (d) of Lemma \ref{GenLem} is satisfied.
\end{enumerate}}
\end{prop}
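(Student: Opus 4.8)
The plan is to prove the four parts largely separately, with (i) and (ii) sharing one engine and forming the heart of the matter. Throughout I work with the three regions into which the construction cuts $\R_{\ge0}$: the initial region $[0,b-5)$, on which each $X_i$ agrees with $x_i$; the middle ``free-like'' region, on which the $X_i$ realize the almost-free elements $g_i$; and the terminal region, on which the $X_i$ again agree with the $x_i$.

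For (i), I would use precisely the reduction criterion isolated in the subsection ``Getting generators, revisited'': a word $V$ in the $X_i$ equals the corresponding lower-case word $v$ in $F$ as soon as (1) $V$ is trivial on the free-like region, (2) the $X_i$ agree with the $x_i$ on $[0,b-5)$, and (3) $v$ has support in $[0,b-5)$. Condition (2) is built into the construction, so the work is to check (1) and (3) for each of $Z,W,P,Q,H,K$. The key cases are $Z=[S,\Sigma]$ and $W=[T,\Theta]$: since $\Sigma=C^{-i}SC^i$ and $\Theta=C^{-i}TC^i$ with $i=m+2$, conjugation by $C^i$ moves the free-region support of $S$ (resp.\ $T$) far enough along the free-like region that it becomes disjoint from its own, so that $S$ commutes with $\Sigma$ and $T$ with $\Theta$ there; equivalently, $Z$ and $W$ are exactly the long commutator relations satisfied by the almost-free elements. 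Hence $Z$ and $W$ are trivial on the free-like region, and triviality there propagates through $P=Z^{-1}W$ and through the conjugations defining $Q,H,K$, since conjugating an element that fixes the free-like region by $X_1$, which preserves that region, again fixes it. For (3), the lower-case $s,t,c$ have support near the origin, and for $b$ large the shifted conjugates $\sigma,\theta$ still have support inside $[0,b-5)$; hence so do $z,w,p,q,h,k$. This is where ``sufficiently large $b$'' enters.

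Claim (ii) then reduces, via (i), to the purely algebraic identities $h=w_1^{-1}$ and $k=y_1^{-1}$ among the lower-case words, a computation inside $F$ using only the relations $x_jx_i=x_ix_{j+1}$ (equivalently, rectangular diagrams). The chain $p=z^{-1}w$, $q=x_1^{-1}px_1p^{-1}$, $h=x_1^{-2}qx_1^2$, $k=x_1hx_1^{-1}$ is engineered so that these normal-form reductions telescope down to the short elements $w_1^{-1}$ and $y_1^{-1}$; I would reduce $z$ and $w$ first, then $p$ and $q$, tracking supports to keep the words short. I expect this telescoping, together with the verification that $[S,\Sigma]$ and $[T,\Theta]$ genuinely vanish on the free-like region, to be the main obstacle: everything rests on those two commutators being honest relations of the almost-free elements.

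For (iii), I would restrict a putative reduced relation of length less than $2m-3$ in $X_1,X_2$ to the free-like region. There $X_1$ and $X_2=X_0^{-1}X_1X_0$ become a pair of almost-free homeomorphisms (conjugates of the $g_i$) carrying the same block structure as in \tref{BasicBlocks} and \tref{SecondBlock}, so the fixed-point tracking argument of ``Avoiding relations, revisited'' applies verbatim: choose a seed point near the middle of the region lying in a set associated with neither the last letter nor the inverse of the first, and show by induction on length that its image stays within $\frac12$ of the attracting set of the current last letter so long as the word has length at most $2m-3$. Hence the word moves that point and is nontrivial on the free-like region, so it is nontrivial in $F$. Finally, for (iv) I would use that (ii) gives $G\supseteq F_{[1,3]}$ and that $X_0$ agrees with $x_0$ — hence translates by $1$ — on the terminal region, while realizing the translation-by-$1$ modifications at the two ends of the free-like region (the right-hand graph of \tref{LeftEnd} and the graph of \tref{RightEnd}). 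Hypothesis (d) is equivalent to every $G$-orbit meeting $(1,3)$, and this I would establish region by region: powers of $X_0$ sweep $(1,3)$ across $[0,b-5)$; the engineered translation-by-$1$ behaviour at the ends of the free-like region lets orbits enter and leave it, while the almost-free dynamics of $X_0,X_1$ inside carry points across the blocks; and on the terminal region $X_0$ again translates by $1$. Piecing these together shows the $G$-translates of $(1,3)$ exhaust $(0,\infty)$, which is hypothesis (d) of Lemma~\ref{GenLem}.
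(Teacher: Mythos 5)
Your proposal follows the paper's architecture on all four parts, and for (i) and (ii) it is essentially the paper's proof in outline: the paper verifies your deferred support claims by rectangular-diagram computations, showing that in \((b-5,\infty)\) the supports of \(S\) and \(T\) are \((b-2\frac12,\, b+4m+3\frac18)\), that the support of \(C\) there is the single interval \((b-4\frac12,\, b+4m+3\frac12)\) on whose interior \(\eta C>\eta\), and that \(\eta C^{m+2} > b+4m+3\frac{3}{16}\) for all \(\eta\ge b-3\frac12\); hence \(\Sigma\) and \(\Theta\) have their right-hand supports inside \((b+4m+3\frac{3}{16},\, b+4m+3\frac12)\), disjoint from those of \(S\) and \(T\), which is exactly your disjointness-after-conjugation step, and (ii) is indeed the paper's long normal-form telescoping from \(z\) and \(w\) down to \(x_2x_1^{-1}\) and \(x_1x_2x_1^{-2}\).

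The genuine gap is in your (iii). The paper's proof is the one-line observation that \(X_0\) and \(X_1\) agree with \(g_0\) and \(g_1\) on \([b+1,b+4m]\), so the fixed-point tracking argument applies to the pair \((X_0,X_1)\) --- which is also the pair the theorem needs (the statement's ``\(X_1\) and \(X_2\)'' is at odds with the paper's own justification). Your version runs the argument for \((X_1,X_2)\) literally and asserts it applies ``verbatim''; it does not. On the middle region \(X_0\) is not a translation, so \(X_2=X_0^{-1}X_1X_0\) is not a translate of \(g_1\): its attracting and repelling sets are the \(X_0\)-images of the integer lattices, and --- fatally for the induction --- the displacement bound that drives the tracking argument fails. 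Each letter is supposed to move the tracked point by at most \(1\), but \(X_2\) can displace points by up to about \(4\), since the \(X_1\)-displacement of at most \(1\) gets stretched by the slopes of \(X_0\), which reach \(4\); the constants ``within \(\frac12\) of the attracting set'' and ``within \(2\) of \(b+2m\)'' then do not survive, and the bound \(2m-3\) does not come out without reworking. (Substituting \(X_2=X_0^{-1}X_1X_0\) and freely reducing gives a nontrivial word in \(X_0,X_1\) of length at most \(3\ell\), but that only excludes relations of length roughly \((2m-3)/3\).) A smaller remark on (iv): your plan to ``carry points across the blocks'' with the almost-free dynamics needs an alternation argument, since each of \(X_0\), \(X_1\) separately has interior fixed points every two units and so neither alone crosses a block (their fixed-point sets in the middle region are disjoint, so alternation does work); the paper instead crosses the whole middle region in one step with powers of \(C\), reusing the estimate \tref{CAtRight} already in hand from (i), and then finishes with \(X_0\) acting as translation by \(1\) on \([b+4m+3,\infty)\), exactly as you do.
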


Since (ii) in the above proposition gives (c) of Lemma \ref{GenLem},
we have that \(X_0\) and \(X_1\) generate \(F\).  This and (iii) of
the proposition give Theorem \ref{THM}.

We add a bit more to the outline before surrendering this paper to
the details.  Items (i) and (ii) are the most technical.  

For (i), we will divide \(\R_{\ge0}\) into two regions, \([0,b-5)\)
and \((b-5,\infty)\).  With \(b\) sufficiently large, the lower case
symbols corresponding to  the words defined in \tref{TheDefs} will
have supports in \([0,b-5)\).  This is the only requirement on
\(b\) and contains the meaning of ``sufficiently large.''

We will show that \(C\) has two parts to its support.  There will be
a part in \([0,b-5)\) where \(C\) and \(c\) agree and a part in a
closed interval \(I_C\) in \((b-5,\infty)\) for which \(\zeta
C>\zeta\) for all \(\zeta\) in the interior of \(I_C\).

The functions defined as \(S\) and \(T\) will also have two parts to
their support.  There will be a part in \([0,b-5)\) where \(S\) and
\(T\) will agree with \(s\) and \(t\), respectively, and a part with
closure in the interior of \(I_C\).

We will show that the the conjugates of \(S\) and \(T\) by \(C^i\)
will have the parts of their supports in \((b-5,\infty)\) disjoint
from the supports of \(S\) and \(T\).  Thus the commutators \(Z\)
and \(W\) will be trivial on \((b-5,\infty)\).  This will verify (i)
for \(Z\) and \(W\), and the truth of (i) for the rest will follow
easily.

The truth of (ii) will follow from a long algebraic calculation.

The argument for (iii) has already been given for the original
functions \(g_0\) and \(g_1\).  The argument applies to \(X_0\) and
\(X_1\) since these agree with the original \(g_0\) and \(g_1\),
respectively, on the interval \([b+1,b+4m]\) needed for the argument.

The truth of (iv) will follow from the information gathered in the
arguments for (ii) and from the definitions of \(X_0\) and \(X_1\).

The next section gives the details needed to verify the truth of
(i), (ii) and (iv) of Proposition \ref{PROP}.  This will complete
the proof of Theorem \ref{THM}.

\section{Details}\mylabel{Details}

The elements in \tref{TheDefs} were found using a computer program
for doing calculations in \(F\) that was written by the author
almost 20 years ago.  It was written partly as an exercise in
learing how to use a compiler compiler (sic).  The elements in
\tref{TheDefs} were found by ``experiment guided by experience.''
Once elements were found with the required properties, the problem
of how to write a proof of Therem \ref{THM} arose.  Listing the
definitions in \tref{TheDefs} and then instructing the reader to the
use the program to check the claimed behaviors was not an option
since the program is very large and its inner workings are (after
almost 20 years) opaque even to the author.  It was then discovered
that the algebra behind (ii) of Proposition \ref{PROP} was not that
bad, and that the ``rectangular diagrams'' of Thurston as described
in \cite{CFP} made the verification of all that is needed for (i) of
Proposition \ref{PROP} very visual.  The result, while manageable,
is still not attractive.

A failed attempt was made to find more attractive examples.  There
may very well be less complicated generators, or ones whose
properties are easier to prove.  However if such examples exist,
they seem hard to find.

\subsection{Proposition \ref{PROP} part (ii)}

We start with the more algebraic calculations.  We will not end with
a proof of (ii), but a proof that (ii) follows from (i).  If (i)
holds, then \(H=h\) and \(K=k\) and proving that \(h=w_1^{-1}\) and
\(k=y_1^{-1}\) will give (ii).  Thus we analyze the lower case symbols.

We put the lower case symbols in normal form.  Some, such as \(s\)
and \(t\) are already given in normal form.

First \[c=x_0^2 (x_1^2 x_3^{-2}) x_0^{-2}.\]

Next \[ \begin{split} c^i &= x_0^2 (x_1^2 x_3^{-2})^i x_0^{-2} \\
  &= x_0^2 (x_1^2 x_3^{-2} x_1^2 x_3^{-2} \cdots x_1^2 x_3^{-2}) x_0^{-2} \\
  &= x_0^2 
     (x_1^{2i} x_{2i+1}^{-2} x_{2i-1}^{-2} \cdots x_5^{-2} x_3^{-2})
     x_0^{-2}. \end{split}\]

We skip \(\sigma\) and \(\theta\) since they are absorbed into
\(z\) and \(w\).  We start with \(z\).

We have \[\begin{split} z = &s (c^{-i} s c^i) s^{-1} (c^{-i} s^{-1} c^i) \\
   =& (x_0x_2x_1^{-2})
      (x_0^2 
       x_3^{2} x_5^{2} \cdots x_{2i-1}^{2} x_{2i+1}^{2} x_1^{-2i}
        x_0^{-2}) \\
    & (x_0x_2x_1^{-2})
      (x_0^2 
       x_1^{2i} x_{2i+1}^{-2} x_{2i-1}^{-2} \cdots x_5^{-2} x_3^{-2}
        x_0^{-2}) \\
     & (x_1^2 x_2^{-1} x_0^{-1})
      (x_0^2 
       x_3^{2} x_5^{2} \cdots x_{2i-1}^{2} x_{2i+1}^{2} x_1^{-2i}
        x_0^{-2}) \\
      & (x_1^2 x_2^{-1} x_0^{-1})
       (x_0^2 
       x_1^{2i} x_{2i+1}^{-2} x_{2i-1}^{-2} \cdots x_5^{-2} x_3^{-2}
        x_0^{-2}).
       \end{split}\]

Now we move the appearances of \(x_0^{\pm2}\) from the long parenthesized
expressions on the right of each line.
\[\begin{split}
   z=& (x_0^3 x_4 x_3^{-2})
      ( 
       x_3^{2} x_5^{2} \cdots x_{2i-1}^{2} x_{2i+1}^{2} x_1^{-2i}
        ) \\
    & (x_0 x_4 x_3^{-2})
      (
       x_1^{2i} x_{2i+1}^{-2} x_{2i-1}^{-2} \cdots x_5^{-2} x_3^{-2}
        ) \\
     & (x_3^2 x_4^{-1} x_0^{-1})
      (
       x_3^{2} x_5^{2} \cdots x_{2i-1}^{2} x_{2i+1}^{2} x_1^{-2i}
        ) \\
      & (x_3^2 x_4^{-1} x_0^{-1})
       (
       x_1^{2i} x_{2i+1}^{-2} x_{2i-1}^{-2} \cdots x_5^{-2} x_3^{-2}
        x_0^{-2}).
\end{split}\]

We cancel adjacent inverse items.
\[\begin{split}
   z=& (x_0^3 x_4 )
      ( 
       x_5^{2} \cdots x_{2i-1}^{2} x_{2i+1}^{2} x_1^{-2i}
        ) \\
    & (x_0 x_4 x_3^{-2})
      (
       x_1^{2i} x_{2i+1}^{-2} x_{2i-1}^{-2} \cdots x_5^{-2}
        ) \\
     & (x_4^{-1} x_0^{-1})
      (
       x_3^{2} x_5^{2} \cdots x_{2i-1}^{2} x_{2i+1}^{2} x_1^{-2i}
        ) \\
      & (x_3^2 x_4^{-1} x_0^{-1})
       (
       x_1^{2i} x_{2i+1}^{-2} x_{2i-1}^{-2} \cdots x_5^{-2} x_3^{-2}
        x_0^{-2}).
\end{split}\]

Now we move the 
remaining  appearances of \(x_0^{\pm1}\) from the middle of the expression.
\[\begin{split}
   z=& (x_0^4 x_5 )
      ( 
       x_6^{2} \cdots x_{2i}^{2} x_{2i+2}^{2} x_2^{-2i}
        ) \\
    & (x_4 x_3^{-2})
      (
       x_1^{2i} x_{2i+1}^{-2} x_{2i-1}^{-2} \cdots x_5^{-2}
        ) \\
     & (x_4^{-1} )
      (
       x_4^{2} x_6^{2} \cdots x_{2i}^{2} x_{2i+2}^{2} x_2^{-2i}
        ) \\
      & (x_4^2 x_5^{-1} )
       (
       x_3^{2i} x_{2i+3}^{-2} x_{2i+1}^{-2} \cdots x_7^{-2} x_5^{-2}
        x_0^{-4}).
\end{split}\]

We cancel one adjacent pair.
\[\begin{split}
   z=& (x_0^4 x_5 )
      ( 
       x_6^{2} \cdots x_{2i}^{2} x_{2i+2}^{2} x_2^{-2i}
        ) \\
    & (x_4 x_3^{-2})
      (
       x_1^{2i} x_{2i+1}^{-2} x_{2i-1}^{-2} \cdots x_5^{-2}
        ) \\
     & (
       x_4 x_6^{2} \cdots x_{2i}^{2} x_{2i+2}^{2} x_2^{-2i}
        ) \\
      & (x_4^2 x_5^{-1} )
       (
       x_3^{2i} x_{2i+3}^{-2} x_{2i+1}^{-2} \cdots x_7^{-2} x_5^{-2}
        x_0^{-4}).
\end{split}\]

We move the \(x_1^{2i}\) from the second line and the \(x_2^{-2i}\) from
the third line.
\[\begin{split}
   z=& (x_0^4 x_1^{2i} x_{2i+5} )
      ( 
       x_{2i+6}^{2} \cdots x_{4i}^{2} x_{4i+2}^{2} x_{2i+2}^{-2i}
        ) \\
    & (x_{2i+4} x_{2i+3}^{-2})
      (
       x_{2i+1}^{-2} x_{2i-1}^{-2} \cdots x_5^{-2}
        ) \\
     & (
       x_4 x_6^{2} \cdots x_{2i}^{2} x_{2i+2}^{2}
        ) \\
      & (x_{2i+4}^2 x_{2i+5}^{-1} )
       (
       x_{2i+3}^{2i} x_{4i+3}^{-2} x_{4i+1}^{-2} \cdots 
                  x_{2i+7}^{-2} x_{2i+5}^{-2}
        x_2^{-2i} x_0^{-4}).
\end{split}\]

We move the \(x_4\) from the beginning of the third line.
\[\begin{split}
   z=& (x_0^4 x_1^{2i} x_4 x_{2i+6} )
      ( 
       x_{2i+7}^{2} \cdots x_{4i+1}^{2} x_{4i+3}^{2} x_{2i+3}^{-2i}
        ) \\
    & (x_{2i+5} x_{2i+4}^{-2})
      (
       x_{2i+2}^{-2} x_{2i}^{-2} \cdots x_6^{-2}
        ) \\
     & (
       x_6^{2} \cdots x_{2i}^{2} x_{2i+2}^{2}
        ) \\
      & (x_{2i+4}^2 x_{2i+5}^{-1} )
       (
       x_{2i+3}^{2i} x_{4i+3}^{-2} x_{4i+1}^{-2} \cdots 
                  x_{2i+7}^{-2} x_{2i+5}^{-2}
        x_2^{-2i} x_0^{-4}).
\end{split}\]

We cancel inverse pairs.
\[
   z= (x_0^4 x_1^{2i} x_4 x_{2i+6} )
       (x_{2i+5}^{-2} x_2^{-2i} x_0^{-4}).
\]

Now we work on \(w\).
\[\begin{split} w = &t (c^{-i} t c^i) t^{-1} (c^{-i} t^{-1} c^i) \\
   =& (x_0^2 x_2 x_4 x_3^{-2} x_1^{-1} x_0^{-1})
      (x_0^2 
       x_3^{2} x_5^{2} \cdots x_{2i-1}^{2} x_{2i+1}^{2} x_1^{-2i}
        x_0^{-2}) \\
    & (x_0^2 x_2 x_4 x_3^{-2} x_1^{-1} x_0^{-1})
      (x_0^2 
       x_1^{2i} x_{2i+1}^{-2} x_{2i-1}^{-2} \cdots x_5^{-2} x_3^{-2}
        x_0^{-2}) \\
     & (x_0 x_1 x_3^2 x_4^{-1} x_2^{-1} x_0^{-2})
      (x_0^2 
       x_3^{2} x_5^{2} \cdots x_{2i-1}^{2} x_{2i+1}^{2} x_1^{-2i}
        x_0^{-2}) \\
      & (x_0 x_1 x_3^2 x_4^{-1} x_2^{-1} x_0^{-2})
       (x_0^2 
       x_1^{2i} x_{2i+1}^{-2} x_{2i-1}^{-2} \cdots x_5^{-2} x_3^{-2}
        x_0^{-2}) \\
   =& (x_0^2 x_2 x_4 x_3^{-2} x_1^{-1} )
      (x_0
       x_3^{2} x_5^{2} \cdots x_{2i-1}^{2} x_{2i+1}^{2} x_1^{-2i}
        ) \\
    & (x_2 x_4 x_3^{-2} x_1^{-1} )
      (x_0
       x_1^{2i} x_{2i+1}^{-2} x_{2i-1}^{-2} \cdots x_5^{-2} x_3^{-2}
        x_0^{-1}) \\
     & (x_1 x_3^2 x_4^{-1} x_2^{-1} )
      (
       x_3^{2} x_5^{2} \cdots x_{2i-1}^{2} x_{2i+1}^{2} x_1^{-2i}
        x_0^{-1}) \\
      & (x_1 x_3^2 x_4^{-1} x_2^{-1} )
       (
       x_1^{2i} x_{2i+1}^{-2} x_{2i-1}^{-2} \cdots x_5^{-2} x_3^{-2}
        x_0^{-2}).
       \end{split}\]

Moving the internal \(x_0^{\pm1}\) gives the following.
\[\begin{split} w 
     =& (x_0^4 x_4 x_6 x_5^{-2} x_3^{-1} )
      (
       x_4^{2} x_6^{2} \cdots x_{2i}^{2} x_{2i+2}^{2} x_2^{-2i}
        ) \\
    & (x_3 x_5 x_4^{-2} x_2^{-1} )
      (
       x_1^{2i} x_{2i+1}^{-2} x_{2i-1}^{-2} \cdots x_5^{-2} x_3^{-2}
        ) \\
     & (x_2 x_4^2 x_5^{-1} x_3^{-1} )
      (
       x_4^{2} x_6^{2} \cdots x_{2i}^{2} x_{2i+2}^{2} x_2^{-2i}
        ) \\
      & (x_3 x_5^2 x_6^{-1} x_4^{-1} )
       (
       x_3^{2i} x_{2i+3}^{-2} x_{2i+1}^{-2} \cdots x_7^{-2} x_5^{-2}
        x_0^{-4}).
       \end{split}\]

We move the \(x_1^{2i}\) from the second line and the \(x_2^{-2i}\) from
the third line.
\[\begin{split} w 
     =& (x_0^4 x_1^{2i} x_{2i+4} x_{2i+6} x_{2i+5}^{-2} x_{2i+3}^{-1} )
      (
       x_{2i+4}^{2} x_{2i+6}^{2} \cdots 
        x_{4i}^{2} x_{4i+2}^{2} x_{2i+2}^{-2i}
        ) \\
    & (x_{2i+3} x_{2i+5} x_{2i+4}^{-2} x_{2i+2}^{-1} )
      (
       x_{2i+1}^{-2} x_{2i-1}^{-2} \cdots x_5^{-2} x_3^{-2}
        ) \\
     & (x_2 x_4^2 x_5^{-1} x_3^{-1} )
      (
       x_4^{2} x_6^{2} \cdots x_{2i}^{2} x_{2i+2}^{2}
        ) \\
      & (x_{2i+3} x_{2i+5}^2 x_{2i+6}^{-1} x_{2i+4}^{-1} )
       (
       x_{2i+3}^{2i} x_{4i+3}^{-2} x_{4i+1}^{-2} \cdots 
        x_{2i+7}^{-2} x_{2i+5}^{-2}
        x_2^{-2i} x_0^{-4}).
       \end{split}\]

We move \(x_2\) and \(x_3^{-1}\) from the third line and cancel
some adjacent pairs.
\[\begin{split} w 
     =& (x_0^4 x_1^{2i} x_2
         x_{2i+5} x_{2i+7} x_{2i+6}^{-2} x_{2i+4}^{-1} )
      (
       x_{2i+5}^{2} x_{2i+7}^{2} \cdots 
        x_{4i+1}^{2} x_{4i+3}^{2} x_{2i+3}^{-2i}
        ) \\
    & (x_{2i+4} x_{2i+6} x_{2i+5}^{-2} x_{2i+3}^{-1} )
      (
       x_{2i+2}^{-2} x_{2i}^{-2} \cdots x_6^{-2}
        ) \\
     & 
      (
       x_5 x_7^{2} \cdots x_{2i+1}^{2} x_{2i+3}^{2}
        ) \\
      & (x_{2i+4} x_{2i+6}^2 x_{2i+7}^{-1} x_{2i+5}^{-1} )
       (
       x_{2i+4}^{2i} x_{4i+4}^{-2} x_{4i+2}^{-2} \cdots 
        x_{2i+8}^{-2} x_{2i+6}^{-2}
       x_3^{-1} x_2^{-2i} x_0^{-4}).
       \end{split}\]

We move the \(x_4\) from the beginning of the third line.
\[\begin{split} w 
     =& (x_0^4 x_1^{2i} x_2 x_5
         x_{2i+6} x_{2i+8} x_{2i+7}^{-2} x_{2i+5}^{-1} )
      (
       x_{2i+6}^{2} x_{2i+8}^{2} \cdots 
        x_{4i+2}^{2} x_{4i+4}^{2} x_{2i+4}^{-2i}
        ) \\
    & (x_{2i+5} x_{2i+7} x_{2i+6}^{-2} x_{2i+4}^{-1} )
      (
       x_{2i+3}^{-2} x_{2i+1}^{-2} \cdots x_7^{-2}
        ) \\
     & 
      (
       x_7^{2} \cdots x_{2i+1}^{2} x_{2i+3}^{2}
        ) \\
      & (x_{2i+4} x_{2i+6}^2 x_{2i+7}^{-1} x_{2i+5}^{-1} )
       (
       x_{2i+4}^{2i} x_{4i+4}^{-2} x_{4i+2}^{-2} \cdots 
        x_{2i+8}^{-2} x_{2i+6}^{-2}
       x_3^{-1} x_2^{-2i} x_0^{-4}).
       \end{split}\]

We cancel inverse pairs.
\[ w 
     = (x_0^4 x_1^{2i} x_2 x_5
         x_{2i+6} x_{2i+8} x_{2i+7}^{-2} x_{2i+5}^{-1} )
       (x_3^{-1} x_2^{-2i} x_0^{-4}).
\]

We continue with \(p\), \(q\), \(h\), and \(k\).
\[\begin{split} 
p = &z^{-1}w \\
  = &(x_0^4 x_1^{2i} x_4 x_{2i+6} 
       x_{2i+5}^{-2} x_2^{-2i} x_0^{-4})^{-1} \\
    &(x_0^4 x_1^{2i} x_2 x_5
         x_{2i+6} x_{2i+8} x_{2i+7}^{-2} x_{2i+5}^{-1} )
       (x_3^{-1} x_2^{-2i} x_0^{-4}) \\
  = &(x_0^4 x_2^{2i} x_{2i+5}^2
       x_{2i+6}^{-1} x_4^{-1} x_1^{-2i} x_0^{-4}) \\
    &(x_0^4 x_1^{2i} x_2 x_5
         x_{2i+6} x_{2i+8} x_{2i+7}^{-2} x_{2i+5}^{-1} 
       x_3^{-1} x_2^{-2i} x_0^{-4}) \\
  = &(x_0^4 x_2^{2i} x_{2i+5}^2
       x_{2i+6}^{-1} x_4^{-1} )
    (x_2 x_5
         x_{2i+6} x_{2i+8} x_{2i+7}^{-2} x_{2i+5}^{-1} 
       x_3^{-1} x_2^{-2i} x_0^{-4}) \\
  = &(x_0^4 x_2^{2i+1} x_{2i+6}^2
       x_{2i+7}^{-1} x_5^{-1} ) 
    (x_5
         x_{2i+6} x_{2i+8} x_{2i+7}^{-2} x_{2i+5}^{-1} 
       x_3^{-1} x_2^{-2i} x_0^{-4}) \\
  = &(x_0^4 x_2^{2i+1} x_{2i+6}^2
       x_{2i+7}^{-1} ) 
    (
         x_{2i+6} x_{2i+8} x_{2i+7}^{-2} x_{2i+5}^{-1} 
       x_3^{-1} x_2^{-2i} x_0^{-4}) \\
  = &(x_0^4 x_2^{2i+1} x_{2i+6}^3
         x_{2i+7}^{-2} x_{2i+5}^{-1} 
       x_3^{-1} x_2^{-2i} x_0^{-4}) \\
  = &(x_0^3 x_1^{2i+1} x_{2i+5}^3
         x_{2i+6}^{-2} x_{2i+4}^{-1} 
       x_2^{-1} x_1^{-2i} x_0^{-3})
\end{split}\]

Now.
\[\begin{split}
x_1^{-1}px_1 
  =& x_1^{-1} (x_0^3 x_1^{2i+1} x_{2i+5}^3
         x_{2i+6}^{-2} x_{2i+4}^{-1} 
       x_2^{-1} x_1^{-2i} x_0^{-3}) x_1 \\
  =&  x_0^3 x_4^{-1} x_1^{2i+1} x_{2i+5}^3
         x_{2i+6}^{-2} x_{2i+4}^{-1} 
       x_2^{-1} x_1^{-2i} x_4 x_0^{-3}  \\
  =&  x_0^3 x_1^{2i+1} x_{2i+5}^{-1} x_{2i+5}^3
         x_{2i+6}^{-2} x_{2i+4}^{-1} x_{2i+5}
       x_2^{-1} x_1^{-2i} x_0^{-3}  \\
  =&  x_0^3 x_1^{2i+1} x_{2i+5}^2
         x_{2i+6}^{-2} x_{2i+4}^{-1} x_{2i+5}
       x_2^{-1} x_1^{-2i} x_0^{-3}  \\
  =&  x_0^3 x_1^{2i+1} x_{2i+5}^2
         x_{2i+6}^{-2} x_{2i+6} x_{2i+4}^{-1}
       x_2^{-1} x_1^{-2i} x_0^{-3}  \\
  =&  x_0^3 x_1^{2i+1} x_{2i+5}^2
         x_{2i+6}^{-1} x_{2i+4}^{-1}
       x_2^{-1} x_1^{-2i} x_0^{-3}  .
\end{split}\]

So.
\[\begin{split}
q = &x_1^{-1} p x_1 p^{-1} \\
    = &(x_0^3 x_1^{2i+1} x_{2i+5}^2
         x_{2i+6}^{-1} x_{2i+4}^{-1}
       x_2^{-1} x_1^{-2i} x_0^{-3} ) \\
      &(x_0^3 x_1^{2i+1} x_{2i+5}^3
         x_{2i+6}^{-2} x_{2i+4}^{-1} 
       x_2^{-1} x_1^{-2i} x_0^{-3})^{-1} \\
     = &(x_0^3 x_1^{2i+1} x_{2i+5}^2
         x_{2i+6}^{-1} x_{2i+4}^{-1}
       x_2^{-1} x_1^{-2i} x_0^{-3} ) \\
      &(x_0^3 x_1^{2i} x_2 x_{2i+4}
         x_{2i+6}^{2} x_{2i+5}^{-3} 
       x_1^{-(2i+1)} x_0^{-3}) \\
     = &x_0^3 x_1^{2i+1} x_{2i+5}^2
       x_{2i+6} x_{2i+5}^{-3} 
       x_1^{-(2i+1)} x_0^{-3} \\
     = &x_0^3 x_{4}^2
       x_{5} x_{4}^{-3} 
       x_0^{-3} \\
     = &x_{1}^2
       x_{2} x_{1}^{-3}.
\end{split}\]

Finally.
\[\begin{split}
  h &= x_1^{-2} q x_1^2 = x_1^{-2} (x_{1}^2
       x_{2} x_{1}^{-3}) x_1^2 = x_2 x_1^{-1} = w_1^{-1}, \\
  k &= x_1 h x_1^{-1} = x_1 x_2 x_1^{-2} = y_1^{-1}.
\end{split}\]

This completes the proof of (ii) from (i).  The calculations above
might give the impression that generating \(F\) (or more to the
point, generating some \(F_{[a,b]}\)) is a rare phenomenon.  This is
not so.  While certain obvious obstructions make the generation of
some \(F_{[a,b]}\) a ``probability zero'' event, it is still
surprisingly easy to arrange that it happens.

\subsection{Proposition \ref{PROP} Part (i)}

Almost all of the effort here will go to understanding the supports
of the elements in \tref{TheDefs}.  What we do here will also supply
the missing details about the defined behavior of \(g_0\) and
\(g_1\) that were only vaguely described in \tref{LeftEnd} and
\tref{RightEnd}.

\subsubsection{First look at the supports}\mylabel{FirstLookSec}

We start with some preliminary estimates that relate to \(b\).
These will be sharpened later.

Each of the elements defined in \tref{TheDefs} acts on \(\R_{\ge0}\)
as the identity on certain intervals.  It will be necessary to know
something about what these intervals are.  We will look at both the
upper and lower case symbols.

We note that the symbols in \tref{TheDefs} occur in three groups.
The symbols \(C\), \(S\) and \(T\) are defined in terms of the
\(X_i\), the symbols \(\Sigma\), \(\Theta\), \(Z\), \(W\) and \(P\)
are defined in terms of \(C\), \(S\) and \(T\), and the last three
symbols are defined in terms of \(P\) and \(X_1\).

We consider \(C\), \(S\) and \(T\) first.  When written in terms of
\(X_0\) and \(X_1\) using \(X_i=X_0^{1-i}X_1X_0^{i-1}\), we see that
\[ \begin{split} T &= X_0^2X_2X_4X_3^{-2} X_1^{-1}X_0^{-1} \\ &= X_0
X_1 X_0^{-1} X_0^{-1} X_1 X_0 X_1^{-1} X_1^{-1} X_0 X_0 X_1^{-1}
X_0^{-1}.\end{split}\] is the longest at 12 letters.  We also note
that the total exponent sum over all the generators in each of
\(C\), \(S\) and \(T\) is zero.  Thus the sum of the positive
exponents is never more than 6.  Since \(X_0^{\pm1}\) and
\(X_1^{\pm1}\) are translations by \(\pm1\) on at least \([3,b-2]\),
it follows that if the length of \([3,b-2]\) is 12 or more, then
each of \(C\), \(S\) and \(T\) has a fixed point at 9.  From this
point we can take \(b\) to be at least 17.  We will see later that
this is overly cautious.

With \(b\) as above, it follows similarly from the fact that each of
\(x_0^{\pm1}\) and \(x_1^{\pm1}\) is translation by \(\pm1\) on at
least \([3,\infty)\) that each of \(c\), \(s\) and \(t\) has support
in \([0,9]\), and that each of \(C\), \(S\) and \(T\) agrees with
\(c\), \(s\) and \(t\), respectively, on \([0,9]\).

It now follows that each of the symbols in the second group
\(\Sigma\), \(\Theta\) \(Z\), \(W\), and \(P\) has a fixed point at
9 and that it agrees with the corresponding lower case symbol on
\([0,9]\).

Discussion of the last group can wait until it is shown that \(W=w\)
and \(Z=z\).

%
% definition section for the rectangular diagrams
%

\newcommand{\LSLANT} % length 16
{
\xy
(0,0); (16,0)**@{-}; (16,8)**@{-}; (0,8)**@{-}; (0,0)**@{-};
(2,0); (8,8)**@{-};
(4,0); (12,8)**@{-};
(8,0); (14,8)**@{-};
\endxy
}

\newcommand{\RSLANT} % length 16
{
\xy
(0,0); (16,0)**@{-}; (16,8)**@{-}; (0,8)**@{-}; (0,0)**@{-};
(8,0); (2,8)**@{-};
(12,0); (4,8)**@{-};
(14,0); (8,8)**@{-};
\endxy
}

\newcommand{\RTERMP} % length 32
{
\xy
(0,0); (32,0)**@{-}; (32,8); (0,8)**@{-}; (0,0)**@{-};
(8,0); (2,8)**@{-};
(12,0); (4,8)**@{-}; (16,0); (8,8)**@{-};
(24,0); (16,8)**@{-};
(32,0); (24,8)**@{-};
\endxy
}

\newcommand{\RTERMN} % length 32
{
\xy
(0,0); (32,0)**@{-}; (32,8); (0,8)**@{-}; (0,0)**@{-};
(8,8); (2,0)**@{-};
(12,8); (4,0)**@{-}; (16,8); (8,0)**@{-};
(24,8); (16,0)**@{-};
(32,8); (24,0)**@{-};
\endxy
}

\newcommand{\LTERMP} % length 32
{
\xy
(0,0); (32,0)**@{-}; (32,8)**@{-}; (0,8)**@{-};
(24,8); (30,0)**@{-};
(20,8); (28,0)**@{-}; (16,8); (24,0)**@{-};
(8,8); (16,0)**@{-};
(0,8); (8,0)**@{-};
\endxy
}

\newcommand{\LTERMN} % length 32
{
\xy
(0,0); (32,0)**@{-}; (32,8)**@{-}; (0,8)**@{-};
(24,0); (30,8)**@{-};
(20,0); (28,8)**@{-}; (16,0); (24,8)**@{-};
(8,0); (16,8)**@{-};
(0,0); (8,8)**@{-};
\endxy
}

\newcommand{\TERMEXTP} % length 8
{
\xy
(0,0); (8,0)**@{-}; (8,8); (0,8)**@{-};
(8,0); (0,8)**@{-};
\endxy
}

\newcommand{\TERMEXTN} % length 8
{
\xy
(0,0); (8,0)**@{-}; (8,8); (0,8)**@{-};
(8,8); (0,0)**@{-};
\endxy
}

\newcommand{\EIGHTBOX} % length 8
{
\xy
(0,8); (8,8)**@{-}; (8,0); (0,0)**@{-};
(4,4)*{\cdots};
\endxy
}

\newcommand{\XZEROP}     % total length 7 time 16 or 112
{
\xy
(0,0)*{\RSLANT};  % length 16
(16,0)*{\LSLANT};  % length 16
(32,0)*{\RSLANT};  % length 16
(48,0)*{\LSLANT};  % length 16
(72,0)*{\RTERMP};  % length 32
(92,0)*{\TERMEXTP};  % length 8
(100,0)*{\TERMEXTP};  % length 8
\endxy
}

\newcommand{\XZERON}    % total length 7 time 16 or 112
{
\xy
(0,0)*{\LSLANT};  % length 16
(16,0)*{\RSLANT};  % length 16
(32,0)*{\LSLANT};  % length 16
(48,0)*{\RSLANT};  % length 16
(72,0)*{\RTERMN};  % length 32
(92,0)*{\TERMEXTN};  % length 8
(100,0)*{\TERMEXTN};  % length 8
\endxy
}

\newcommand{\XONEP}        % total length 7 time 16 or 112
{
\xy
(0,0)*{\EIGHTBOX}; % length 8
(12,0)*{\RSLANT}; % length 16
(28,0)*{\LSLANT}; % length 16
(44,0)*{\RSLANT}; % length 16
(60,0)*{\LSLANT}; % length 16
(84,0)*{\RTERMP}; % length 32
(104,0)*{\TERMEXTP}; %length 8
\endxy
}

\newcommand{\XONEN}    % total length 7 time 16 or 112
{
\xy
(0,0)*{\EIGHTBOX}; % length 8
(12,0)*{\LSLANT}; % length 16
(28,0)*{\RSLANT}; % length 16
(44,0)*{\LSLANT}; % length 16
(60,0)*{\RSLANT}; % length 16
(84,0)*{\RTERMN}; % length 32
(104,0)*{\TERMEXTN}; %length 8
\endxy  
}

\newcommand{\RXCOORDS}
{
\xy
(0,0)*{}; (112,0)*{}; (112,4)*{}; (0,4)*{};
(-56,2)*{\scriptstyle \xi-8};
(-40,2)*{\scriptstyle \xi-6};
(-24,2)*{\scriptstyle \xi-4};
(-8,2)*{\scriptstyle \xi-2};
(8,2)*{\scriptstyle \xi};
(16,2)*{\scriptstyle \xi+1};
(24,2)*{\scriptstyle \xi+2};
(32,2)*{\scriptstyle \xi+3};
\endxy
}

\newcommand{\LXCOORDS}
{
\xy
(0,0)*{}; (112,0)*{}; (112,4)*{}; (0,4)*{};
(-56,2)*{\scriptstyle b-5};
(-40,2)*{\scriptstyle b-3};
(-24,2)*{\scriptstyle b-1};
(-16,2)*{b};
(0,2)*{\scriptstyle b+2};
(16,2)*{\scriptstyle b+4};
(32,2)*{\scriptstyle b+6};
(48,2)*{\scriptstyle b+8};
\endxy
}

\newcommand{\BIGLSLANT} % length 48
{
\xy
(0,0); (48,0)**@{-}; (48,24)**@{-}; (0,24)**@{-}; (0,0)**@{-};
(6,0); (24,24)**@{-};
(12,0); (36,24)**@{-};
(24,0); (42,24)**@{-};
(0,-2)*{\scriptstyle 0};  
(24,-2)*{\scriptstyle 1}; 
(48,-2)*{\scriptstyle 2};
(12,-2)*{\scriptstyle \frac12};
(6,-2)*{\scriptstyle \frac14};
(0,26)*{\scriptstyle 0};  
(24,26)*{\scriptstyle 1}; 
(48,26)*{\scriptstyle 2};
(36,26)*{\scriptstyle \frac32};
(42,26)*{\scriptstyle \frac74};
(7.5,12)*{\frac14};
(19.5,12)*{\frac12};
(28.5,12)*{2};
(40.5,12)*{4};
\endxy
}

\newcommand{\BIGRSLANT} % length 48
{
\xy
(0,0); (48,0)**@{-}; (48,24)**@{-}; (0,24)**@{-}; (0,0)**@{-};
(24,0); (6,24)**@{-};
(36,0); (12,24)**@{-};
(42,0); (24,24)**@{-};
(0,26)*{\scriptstyle 0};  
(24,26)*{\scriptstyle 1}; 
(48,26)*{\scriptstyle 2};
(12,26)*{\scriptstyle \frac12};
(6,26)*{\scriptstyle \frac14};
(0,-2)*{\scriptstyle 0};  
(24,-2)*{\scriptstyle 1}; 
(48,-2)*{\scriptstyle 2};
(36,-2)*{\scriptstyle \frac32};
(42,-2)*{\scriptstyle \frac74};
(7.5,12)*{4};
(19.5,12)*{2};
(28.5,12)*{\frac12};
(40.5,12)*{\frac14};
\endxy
}

\newcommand{\BIGRTERMP} % length 48
{
\xy
(0,0); (48,0)**@{-}; (48,24); (0,24)**@{-}; (0,0)**@{-};
(24,0); (6,24)**@{-};
(36,0); (12,24)**@{-}; (48,0); (24,24)**@{-};
(0,26)*{\scriptstyle 0};
(6,26)*{\scriptstyle \frac14};
(12,26)*{\scriptstyle \frac12};
(24,26)*{\scriptstyle 1};
(48,26)*{\scriptstyle 2};
(0,-2)*{\scriptstyle 0};
(24,-2)*{\scriptstyle 1};
(36,-2)*{\scriptstyle \frac32};
(48,-2)*{\scriptstyle 2};
(7.5,12)*{4};
(19.5,12)*{2};
(30,12)*{1};
(42,12)*{1};
\endxy
}

\newcommand{\BIGRTERMN} % length 48
{
\xy
(0,0); (48,0)**@{-}; (48,24); (0,24)**@{-}; (0,0)**@{-};
(24,24); (6,0)**@{-};
(36,24); (12,0)**@{-}; (48,24); (24,0)**@{-};
(0,-2)*{\scriptstyle 0};
(6,-2)*{\scriptstyle \frac14};
(12,-2)*{\scriptstyle \frac12};
(24,-2)*{\scriptstyle 1};
(48,-2)*{\scriptstyle 2};
(0,26)*{\scriptstyle 0};
(24,26)*{\scriptstyle 1};
(36,26)*{\scriptstyle \frac32};
(48,26)*{\scriptstyle 2};
(7.5,12)*{\frac14};
(19.5,12)*{\frac12};
(30,12)*{1};
(42,12)*{1};
\endxy
}

\newcommand{\BIGLTERMP} % length 48
{
\xy
(0,0); (48,0)**@{-}; (48,24)**@{-}; (0,24)**@{-};
(0,24); (24,0)**@{-};
(12,24); (36,0)**@{-};
(24,24); (42,0)**@{-};
(0,-2)*{\scriptstyle 0};
(24,-2)*{\scriptstyle 1};
(36,-2)*{\scriptstyle \frac32};
(42,-2)*{\scriptstyle \frac74};
(48,-2)*{\scriptstyle 2};
(0,26)*{\scriptstyle 0};
(12,26)*{\scriptstyle \frac12};
(24,26)*{\scriptstyle 1};
(48,26)*{\scriptstyle 2};
(6,12)*{1};
(18,12)*{1};
(28.5,12)*{\frac12};
(40.5,12)*{\frac14};
\endxy
}

\newcommand{\BIGLTERMN} % length 48
{
\xy
(0,0); (48,0)**@{-}; (48,24)**@{-}; (0,24)**@{-};
(0,0); (24,24)**@{-};
(12,0); (36,24)**@{-};
(24,0); (42,24)**@{-};
(0,26)*{\scriptstyle 0};
(24,26)*{\scriptstyle 1};
(36,26)*{\scriptstyle \frac32};
(42,26)*{\scriptstyle \frac74};
(48,26)*{\scriptstyle 2};
(0,-2)*{\scriptstyle 0};
(12,-2)*{\scriptstyle \frac12};
(24,-2)*{\scriptstyle 1};
(48,-2)*{\scriptstyle 2};
(6,12)*{1};
(18,12)*{1};
(28.5,12)*{2};
(40.5,12)*{4};
\endxy
}

\newcommand{\LXZEROP}
{
\xy
(-20,0)*{\TERMEXTP};  %length 8
(0,0)*{\LTERMP};  % length 32
(24,0)*{\LSLANT};  % length 16
(40,0)*{\RSLANT};  % length 16
(56,0)*{\LSLANT};  % length 16
(72,0)*{\RSLANT};  % length 16
(84,0)*{\EIGHTBOX};  % length 8
\endxy
}

\newcommand{\LXONEP}
{\xy
(-8,0)*{\TERMEXTP};  %length 8
(0,0)*{\TERMEXTP};  % length 8
(20,0)*{\LTERMP};  % length 32
(44,0)*{\LSLANT};  % length 16
(60,0)*{\RSLANT};  % length 16
(76,0)*{\LSLANT};  % length 16
(92,0)*{\RSLANT};  % length 16
\endxy
}

\newcommand{\LXZERON}
{
\xy
(-20,0)*{\TERMEXTN};  %length 8
(0,0)*{\LTERMN};  % length 32
(24,0)*{\RSLANT};  % length 16
(40,0)*{\LSLANT};  % length 16
(56,0)*{\RSLANT};  % length 16
(72,0)*{\LSLANT};  % length 16
(84,0)*{\EIGHTBOX};  % length 8
\endxy
}

\newcommand{\LXONEN}
{\xy
(-8,0)*{\TERMEXTN};  %length 8
(0,0)*{\TERMEXTN};  % length 8
(20,0)*{\LTERMN};  % length 32
(44,0)*{\RSLANT};  % length 16
(60,0)*{\LSLANT};  % length 16
(76,0)*{\RSLANT};  % length 16
(92,0)*{\LSLANT};  % length 16
\endxy
}

%
% end of the definition section
%

\subsubsection{Describing the elements}

We make use of the rectangular diagrams of \cite{CFP}.  In
\tref{Slants} below are the digrams for the basic building blocks
shown in \tref{BasicBlocks}.

\mymargin{Slants}\begin{equation}\label{Slants}
\xy
(0,0)*{\BIGRSLANT};   (60,0)*{\BIGLSLANT};
\endxy
\end{equation}

The numbers at the top give coordinates in the domain, and the
numbers at the bottom give coordinates for the range.  The function
is viewed as going from the top of the rectangle to the bottom.  The
numbers in the middle give the slopes.  The slopes will be useful in
calculating the effects of some compositions.

The two figures in \tref{Slants} are mutual inverses.  Note that,
with the exceptions of the numbers across the middle, the the two
figures in \tref{Slants} are reflections of each other across a
horizontal line through the center.  The slopes in one figure are
the reciprocals of the slopes in the other.

To describe more complicated functions, we will put together smaller
versions of the pictures above, with less information about
coordinates and no information about slopes.  For example, the
function in \tref{SecondBlock} would be decribed by the following.

\[
\xy
(0,0)*{\LSLANT}; (16,0)*{\RSLANT};
(-8,6)*{\scriptstyle 0};
(8,6)*{\scriptstyle 2};
(24,6)*{\scriptstyle 4};
(-8,-6)*{\scriptstyle 0};
(8,-6)*{\scriptstyle 2};
(24,-6)*{\scriptstyle 4};
\endxy
\]

To describe the function in the right figure of \tref{LeftEnd} we
will make use of the diagram in the left of \tref{LTerms} below.
The right figure in \tref{LTerms} is the inverse of the left figure.

\mymargin{LTerms}\begin{equation}\label{LTerms}
\xy
(0,0)*{\BIGLTERMP};   (60,0)*{\BIGLTERMN};
\endxy
\end{equation}

The coordinates in \tref{LTerms} have been arbitrarily chosen to
start at 0, and the left edge is missing since 0 is not a fixed
point.  A diagram for the function in the right part of
\tref{LeftEnd} is as follows.  We do not bother with the coordinates
on the bottom.
\[
\xy
(0,0)*{\LTERMP}; (24,0)*{\LSLANT}; (40,0)*{\RSLANT};
(-16,6)*{\scriptstyle b-4};
(0,6)*{\scriptstyle b-2};
(16,6)*{\scriptstyle b};
(32,6)*{\scriptstyle b+2};
(48,6)*{\scriptstyle b+4};
\endxy
\]

To describe the function in \tref{RightEnd}, we will use the left
figure in \tref{RTerms} below whose inverse is in the right part of
\tref{RTerms}.

\mymargin{RTerms}\begin{equation}\label{RTerms}
\xy
(0,0)*{\BIGRTERMP};   (60,0)*{\BIGRTERMN};
\endxy
\end{equation}

A diagram for the function in \tref{RightEnd} is as follows where we
show only a few coordinates.
\[
\xy
(0,0)*{\RSLANT}; (16,0)*{\LSLANT}; (40,0)*{\RTERMP};
(8,6)*{\scriptstyle b+4m};
(32,6)*{\scriptstyle b+4m+3};
(56,6)*{\scriptstyle b+4m+6};
\endxy
\]

As is seen, coordinates such as \(b+4m\) and \(b+4m+6\) are
cumbersome.  From now on \(\xi\) will represent \(b+4m+2\), the
rightmost fixed point (which happens to be repelling) of \(X_0\).

The diagrams are too bulky to show all parts of a given element from
\tref{TheDefs}.  We will restrict ourselves to diagrams in the
neighborhood of \(b\) and diagrams in the neighborhood of \(\xi =
b+4m+2\).

\subsubsection{The analysis of supports near \protect\(\xi
\protect\)}\mylabel{XiSupSec}

Let us first tackle diagrams at the right end, in the neighborhood
of \(\xi\).  We start with the generators.

First we have \(X_0\).
\[
\xy
(0,0)*{\XZEROP};
(27,6)*{\RXCOORDS};
\endxy
\]

Next we have \(X_1\).
\[
\xy
(0,0)*{\XONEP};
(27,6)*{\RXCOORDS};
\endxy
\]

%
%We also show the inverses.  Here is \(X_0^{-1}\).
%\[
%\xy
%(0,0)*{\XZERON};
%(27,6)*{\RXCOORDS};
%\endxy
%\]
%
%
%And here is \(X_1^{-1}\).
%\[
%\xy
%(0,0)*{\XONEN};
%(27,6)*{\RXCOORDS};
%\endxy
%\]
%

Their inverses are obtained by reflecting across a central horizontal
line.

Compositions are shown by stacking the diagrams vertically.  We
start with the simpler of \(S\) and \(T\).

Since \(S=X_1X_0X_1^{-1}X_1^{-1}\), we get the following diagram for
\(S\). 
\[
\xy
(0,0)*{\XONEP};
(0,-8)*{\XZEROP};
(0,-16)*{\XONEN};
(0,-24)*{\XONEN};
(27,6)*{\RXCOORDS};
\endxy
\]

The picture above shows that \(\xi+1\frac14\) is an upper bound for
the support of \(S\).  The actual right endpoint for the support
needs more careful inspection.  By tracing the slopes from top to
bottom, using the figures in \tref{Slants} and \tref{RTerms}, it is
seen that between \(\xi+1\frac18\) and \(\xi+1\frac14\) the slopes
encountered are \(4\), \(1\), \(1\) and \(\frac14\) in that order
from top to bottom.  However, the slopes between \(\xi\) and
\(\xi+1\frac18\) are \(4\), \(1\), \(\frac12\) and \(\frac14\).
Thus \(\xi+1\frac18\) is the right endpoint of the support of \(S\).

Our next task is to tackle \(T= X_0 X_1 X_0^{-1}
X_0^{-1} X_1 X_0 X_1^{-1} X_1^{-1} X_0 X_0 X_1^{-1} X_0^{-1}\).
\[
\xy
(0,0)*{\XZEROP};
(0,-8)*{\XONEP};
(0,-16)*{\XZERON};
(0,-24)*{\XZERON};
(0,-32)*{\XONEP};
(0,-40)*{\XZEROP};
(0,-48)*{\XONEN};
(0,-56)*{\XONEN};
(0,-64)*{\XZEROP};
(0,-72)*{\XZEROP};
(0,-80)*{\XONEN};
(0,-88)*{\XZERON};
(27,6)*{\RXCOORDS};
\endxy
\]

In an analysis almost identical to that of \(S\), we get that
\(\xi+1\frac18\) is the right endpoint of the support of \(T\).

Now we look at \(C=X_0X_0X_1X_1X_0^{-1}X_0^{-1}X_1^{-1}X_1^{-1}\).
\[
\xy
(0,0)*{\XZEROP};
(0,-8)*{\XZEROP};
(0,-16)*{\XONEP};
(0,-24)*{\XONEP};
(0,-32)*{\XZERON};
(0,-40)*{\XZERON};
(0,-48)*{\XONEN};
(0,-56)*{\XONEN};
(27,6)*{\RXCOORDS};
\endxy
\]

We get the following from the picture above.  For an integer \(j>0\)
for which \(\xi-4j\) is in the pattern above, we have
\(\xi-4j+\frac14\) is carried by \(C\) to at least
\(\xi-4j+4\frac34\).  In particular \(\xi-4+\frac14\) is carried to
greater than \(\xi+\frac34\).  Using the information in
\tref{RTerms} with the figure above, we get that \(\xi+\frac34\) is
carried to \(\xi+1\frac3{16}\). Further, the interval from \(\xi+1\)
to \(\xi+1\frac12\) is carried affinely with slope \(\frac12\) to
the interval from \(\xi+1\frac14\) to \(\xi+1\frac12\).  Thus
\(\xi+1\frac12\) is the right endpoint of the support of \(C\).

It follows that for an integer \(j>0\) for which \(\xi-4j\) is in
the pattern above, we have that any \(\eta\) in \((\xi-4j + \frac14,
\xi+1\frac12)\) has \(\eta C>\eta\).  Further, any such \(\eta\) has
\mymargin{CAtRight}\begin{equation}\label{CAtRight}
\begin{split} \eta C^j &> \xi+{\textstyle \frac34}, \\
\eta C^{j+1} &> \xi+1{\textstyle \frac3{16}}. \end{split}
\end{equation}

The point is that \(\xi+1\frac3{16}\) is greater than
\(\xi+1\frac18\), the right endpoint of the support of \(S\) and of
\(T\).

\subsubsection{The analysis of supports near \protect\(b
\protect\)}\mylabel{BSupSec}

We create pictures for the generators near \(b\) in much the same
way as we do near \(\xi\).  The reader can verify that the that the
following is an accurate combination of diagrams for \(X_0^{\pm1}\)
and \(X_1^{\pm1}\) that gives the behavior of
\(C=X_0X_0X_1X_1X_0^{-1}X_0^{-1}X_1^{-1}X_1^{-1}\) near \(b\).
\[
\xy
(0,0)*{\LXZEROP};
(0,-8)*{\LXZEROP};
(0,-16)*{\LXONEP};
(0,-24)*{\LXONEP};
(0,-32)*{\LXZERON};
(0,-40)*{\LXZERON};
(0,-48)*{\LXONEN};
(0,-56)*{\LXONEN};
(27,6)*{\LXCOORDS};
\endxy
\]

From the diagram above, we get the following information.  First, the
left endpoint of the support of \(C\) near \(b\) is \(b-4\frac12\).
Second, a value of \(j\) for which \tref{CAtRight} is valid is that
\(j\) for which \(\xi-4j=b+2\).  This value of \(j\) satisfies
\(b+4m+2-4j=b+2\) or \(j=m\).  Third, we note that 
\((b-3\frac12)C > b+2\frac34\).  Combining this information with
\tref{CAtRight}, we get that for any \(\eta\ge(b-3\frac12)\) we have 
\mymargin{CAtLeft}\begin{equation}\label{CAtLeft}
\eta C^{m+2} > {\textstyle \xi+1\frac3{16}}.
\end{equation}

We next look at \(S\) and \(T\).

The diagram for \(S=X_1X_0X_1^{-1}X_1^{-1}\) near \(b\) follows.
\[
\xy
(0,0)*{\LXONEP};
(0,-8)*{\LXZEROP};
(0,-16)*{\LXONEN};
(0,-24)*{\LXONEN};
(27,6)*{\LXCOORDS};
\endxy
\]

The diagram for \(T= X_0 X_1 X_0^{-1}
X_0^{-1} X_1 X_0 X_1^{-1} X_1^{-1} X_0 X_0 X_1^{-1} X_0^{-1}\) near
\(b\) is below.
\[
\xy
(0,0)*{\LXZEROP};
(0,-8)*{\LXONEP};
(0,-16)*{\LXZERON};
(0,-24)*{\LXZERON};
(0,-32)*{\LXONEP};
(0,-40)*{\LXZEROP};
(0,-48)*{\LXONEN};
(0,-56)*{\LXONEN};
(0,-64)*{\LXZEROP};
(0,-72)*{\LXZEROP};
(0,-80)*{\LXONEN};
(0,-88)*{\LXZERON};
(27,6)*{\LXCOORDS};
\endxy
\]

A trace through the two diagrams above, using the information in
\tref{LTerms} about the slopes, shows that the left endpoint of the
supports of \(S\) and \(T\) near \(b\) is \(b-2\frac12\).

The fact that the left endpoint of the support of \(C\) near \(b\)
is \((b-4\frac12)\) and that the left endpoint of the supports of
both \(S\) and \(T\) near \(b\) is \((b-2\frac12)\) explains why our
claims in Section \ref{Outline} refer to \([0,b-5)\) and \((b-5,
\infty)\).  

\subsubsection{End of the proof of Part (i)}

From Sections \ref{XiSupSec} and \ref{BSupSec}, we have the
following information.  Using the fact that \(\xi=b+4m+2\), we
have that the supports of \(C\), \(S\) and \(T\) in
\((b-5, \infty)\) are given by 
\[
\begin{split}
C:\quad &{\textstyle (b-4\frac12 ,\,\, b+4m+3\frac12)}, \\
S,\,T:\quad &{\textstyle (b-2\frac12 ,\,\, b+4m+3\frac18)}.
\end{split}
\]

From \tref{CAtLeft} and the fact that \(i=m+2\), we know that for
any \(\eta \ge (b-3\frac12)\) 
we have 
\[
\eta C^i > {\textstyle b+4m+3\frac3{16}}.
\]

From the facts above, we know that the supports of
\(\Sigma=C^{-i}SC^i\) and \(\Theta=C^{-i}TC^i\) in \((b-5, \infty)\)
are both contained in \[({\textstyle b+4m+3\frac3{16}, \,\,
b+4m+3\frac12})\] which is disjoint from the supports of \(S\) and
\(T\) in \((b-5, \infty)\).  Thus the restrictions of \(Z = [S,\Sigma]\)
and \(W=[T,\Theta]\) to \((b-5, \infty)\) are trivial.

From our discussion in Section \ref{FirstLookSec}, we see that
\(W=w\) and \(Z=z\).  From the definition \(P=Z^{-1}W\), we get
\(P=p\).  Lastly, with \(Q\), \(H\) and \(K\) defined in terms of
\(P\) and \(X_1\), we get the rest of (i) of Proposition
\ref{PROP}.

\subsection{Proposition \ref{PROP} Part (iv)}

We must show that the translates of the interval \((1,3)\) under
words in \(X_0\) and \(X_1\) cover all of \((0,\infty)\).

Since the orbit of \((b-1)\) under \(X_0\) includes all the integers
below \(b-1\) as well as all fractions of the form \(1/2^n\) for a
positive integer \(n\), we get that the translates cover at least
\((0,b-1)\).  

Since \((1, b-1)\) is covered by finitely many translates of
\((1,3)\), we may work from now on with \((1, b-1)\) instead of
\((1,3)\).

Since \(C\) has a fixed point at \((b-5)\) which we take to be
bigger than 1, and since \((b-1)\) is carried by powers of \(C\) to
at least \((b+4m+3\frac12)\).  We can now make another replacement
and work from now on with the interval \((1, b+4m+3)\).

From the discussion above \tref{RightEnd}, we know that \(X_0\) acts
as translation by 1 on \([b+4m+3, \infty)\).  Since \(X_0\) has a
fixed point at \((b+4m+2)\), we get all points in \((1,\infty)\)
covered.

Combining the information in the four paragraphs above completes
the argument for (iv).

%\bibliography{thompson}

\providecommand{\bysame}{\leavevmode\hbox to3em{\hrulefill}\thinspace}
\providecommand{\MR}{\relax\ifhmode\unskip\space\fi MR }
% \MRhref is called by the amsart/book/proc definition of \MR.
\providecommand{\MRhref}[2]{%
  \href{http://www.ams.org/mathscinet-getitem?mr=#1}{#2}
}
\providecommand{\href}[2]{#2}

\noindent Department of Mathematical Sciences

\noindent State University of New York at Binghamton

\noindent Binghamton, NY 13902-6000

\noindent USA

\noindent email: matt@math.binghamton.edu

\end{document}